\newtheorem{thm}{Theorem}[section]
\newtheorem{prop}[thm]{Proposition}
\theoremstyle{definition}
\theoremstyle{remark}
\def \R{\mathbb R}
\def \N{{\mathbb N}}
\def \H{\mathbb H}
\begin{document}

\title{A short proof of unique ergodicity of horospherical foliations on infinite volume hyperbolic manifolds} 
\author{  Barbara SCHAPIRA}
 
\maketitle

\begin{abstract}
\end{abstract}

\section{Introduction}

The (unstable) horocycle flow on the unit tangent bundle of compact hyperbolic surfaces is uniquely ergodic. 
Furstenberg \cite{Furstenberg} proved that the Liouville measure is the unique invariant measure under this flow. 
This result has been extended to  many noncompact situations. 
On finite volume hyperbolic surfaces, Dani \cite{Dani78} proved that the Liouville measure 
is the unique finite invariant ergodic measure, except the measures supported on periodic orbits. 
On convex-cocompact hyperbolic surfaces, Burger \cite{Burger} proved that there is a unique locally finite ergodic invariant measure. 
It also follows from a result of Bowen-Marcus establishing the unique ergodicity of strong (un)stable
 Babillot-Ledrappier \cite{BL96} and Sarig \cite{Sarig} described completely the set of invariant ergodic measures
 of the horocyclic flow on abelian covers of compact hyperbolic surfaces. 

Roblin \cite{Roblin} proved in a much more general context that the unstable horocyclic flow 
admits a unique invariant ergodic Radon measure whose support is the set
of vectors whose negative geodesic orbit comes back infinitely often to a compact set, as soon as the 
geodesic flow admits a finite measure of maximal entropy. 

The goal of this note is to provide a new simpler proof of his result, 
inspired by the arguments of \cite{Coudene} in the case of surfaces of finite volume, 
with  additional ingredients to deal with the fact that when the manifold has infinite volume, 
there is in general no ergodic invariant measure which is invariant and ergodic under both
 the geodesic flow and its strong (un)stable foliation. 

Let us state the result with more precisions. 

Let $M=\Gamma\backslash\H^n$ be a hyperbolic manifold of dimension $n$ 
with infinite manifold. Let $G=SO^o(n,1)$ the group 
of isometries preserving orientation of $\H^n$, and $G=NAK$ its Iwasawa decomposition. 
The homogeneous space $\Gamma\backslash G$ is the frame bundle over $M=\Gamma\backslash G/K$. 
The action of $A $ by right multiplication on $\Gamma\backslash G$ is the natural lift of the action of
 the geodesic flow on the unit tangent bundle $T^1M$: 
$A $ moves the first vector $v_1$ of a frame $F=(v_1,\dots, v_n)$ as the geodesic flow    on $T^1M$ does, 
and the other vectors of the frame follow by parallel transport along the geodesic of $v_1$. 
The $N$-orbits on $\Gamma\backslash G$ are the strong unstable manifolds of this $A$ action, and project  to $T^1M$ 
onto the strong unstable leaves of the geodesic flow. 

We give a simple proof of the following result due to Roblin in a more general context (geodesic flows of $CAT(-1)$ spaces). 
We state it on $\Gamma\backslash G$ instead of $T^1M$. 

\begin{thm}[Roblin  \cite{Roblin}] Let $M=\Gamma\backslash\H^n$ be a hyperbolic manifold. 
Assume that $\Gamma$ is Zariski dense and that the geodesic flow on $T^1M$ 
admits a probability measure maximizing entropy. 
Then there is a unique (up to multiplicative constants) $N$-invariant conservative measure giving full measure to the
set 
$$
\Omega_{rad}^\mathcal{F}=\{F\in \Gamma\backslash G, a_{-t}F \mbox{ returns infinitely often in a compact set}\,\}
$$
\end{thm}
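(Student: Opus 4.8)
The plan is to treat existence and uniqueness separately, obtaining existence from the Patterson--Sullivan construction and uniqueness from a mixing argument in the spirit of \cite{Coudene}, with the finite measure of maximal entropy playing the role of an auxiliary object that drives the geodesic dynamics while the $N$-invariant measure carries the horospherical invariance. First I would set up the Hopf parametrization of $\Gamma\backslash G$, writing a frame as $(\xi^-,\xi^+,t,\theta)$ with $\xi^\pm\in\partial\H^n$ the endpoints of its geodesic, $t\in\R$ its position along that geodesic, and $\theta\in\SO(n-1)$ the transverse framing. In these coordinates the strong unstable ($N$-)leaf through a frame is the horosphere based at $\xi^-$ carrying it, so that $\xi^-$ is constant along each leaf and the space of leaves is coordinatized, up to the flow and frame directions, by $\xi^-$. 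Consequently an $N$-invariant Radon measure is exactly the datum of its Haar conditionals along the leaves together with a holonomy-invariant transverse measure, the essential part of which is a measure in the $\xi^-$ variable. Existence is then obtained by gluing these Haar conditionals to the transverse measure built from the Patterson--Sullivan density $\mu_o$, giving the Burger--Roblin measure, schematically $d\mu^{\mathrm{BR}}\propto d\mathrm{Leb}(\xi^+)\,d\mu_o(\xi^-)\,dt$ times Haar on $\SO(n-1)$; its conservativity and full measure on $\Omega_{rad}^{\mathcal{F}}$ follow from the Hopf--Tsuji--Sullivan dichotomy, which applies here because finiteness of the measure of maximal entropy forces $\Gamma$ to be of divergence type.

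The core is uniqueness, which I would reduce to the statement that the transverse measure of any $N$-invariant conservative measure $\mu$ with $\mu(\Omega_{rad}^{\mathcal{F}})>0$ is, up to a multiplicative constant, the Patterson--Sullivan transverse measure. The mechanism relating $\mu$ to the Patterson--Sullivan data is the geodesic flow: since $a_tNa_{-t}=N$, with the $N$-parameter rescaled by $e^{t}$, each $(a_t)_*\mu$ is again $N$-invariant, and flowing a small flow box under $a_{-t}$ expands its strong unstable plaques while contracting it transversally. I would then invoke mixing of the frame flow with respect to the finite measure $m^{\mathrm{BMS}}$ --- this is exactly where Zariski density is used, to upgrade ergodicity of the geodesic flow on $T^1M$ to genuine mixing of the frame flow on $\Gamma\backslash G$, following Brin--Pesin and Winter --- to conclude that the expanded plaques equidistribute according to $m^{\mathrm{BMS}}$. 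Since $m^{\mathrm{BMS}}$ carries the Patterson--Sullivan measure transverse to the unstable direction, matching the $\mu$-mass of the flowed boxes against their $m^{\mathrm{BMS}}$-mass forces the transverse measure of $\mu$ to coincide, up to scale, with the Patterson--Sullivan one, hence $\mu\propto\mu^{\mathrm{BR}}$.

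The main obstacle, and the genuinely new point compared with the finite-volume case treated in \cite{Coudene}, is that in infinite volume there is no single measure invariant and ergodic under both the geodesic flow and its strong unstable foliation: $m^{\mathrm{BMS}}$ is geodesic-invariant but not $N$-invariant, while $\mu^{\mathrm{BR}}$ is $N$-invariant but not geodesic-invariant. One is therefore forced to run the argument with two measures in tandem and to transfer information between them only through their common Patterson--Sullivan behaviour along, and transverse to, strong unstable leaves. Making this transfer rigorous for a locally finite, non-probability measure $\mu$ is the technical crux: one cannot normalize $\mu$, so the equidistribution step must be localized and controlled by a Hopf-type ratio argument rather than by a direct weak-$*$ limit of probability measures.

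This is precisely where conservativity and full measure on $\Omega_{rad}^{\mathcal{F}}$ are essential. They guarantee that the backward orbit $a_{-t}F$ of $\mu$-almost every frame $F$ returns infinitely often to a fixed compact set, which is what allows the comparison of flowed-box masses to be carried out along a subsequence of recurrence times on a set of full $\mu$-measure, and to pass to the limit. I expect the bulk of the technical work to lie in this last step: choosing the boxes and the recurrence times so that the mixing estimate for $m^{\mathrm{BMS}}$ can be applied uniformly, controlling the transverse mass of $\mu$ on the recurrent part, and verifying that the resulting identity of transverse measures is insensitive to the choices, so that it yields the single constant of proportionality claimed in the theorem.
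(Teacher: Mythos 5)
Your plan follows essentially the same route as the paper: uniqueness via Winter's mixing of the frame flow on $\Gamma\backslash G$ (where Zariski density enters), a Hopf-type ratio ergodic theorem for the infinite $N$-invariant measure (Hochman's theorem for $\R^d$-actions), recurrence of $a_{-t}F$ along a subsequence $t_k$ supplied by conservativity, and identification of the holonomy-invariant transverse measure with the Patterson--Sullivan one, yielding $\nu\propto m_{BR}$. The one mechanism your sketch leaves unnamed --- how to apply mixing ``uniformly'' at the moving basepoints $a_{-t_k}F\to F_\infty$ --- is precisely the paper's equicontinuity proposition for the flowed Bowen--Margulis leafwise averages $M_1^t(\varphi)$ (together with the normalization arranging that the relevant $N$-balls have boundaries of zero conditional measure), which upgrades Winter's pointwise equidistribution to uniform convergence on the compact set $\{F_\infty\}\cup\{a_{-t_k}F\}$.
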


I thank Hee Oh for the reference \cite{Hochman}, which allows a proof valid in any dimension. 

\section{Infinite volume manifolds, actions of $A$ and $N$ }

Let $G=SO^o(n,1)$ be the group of direct isometries of the hyperbolic $n$-space $\H^n$. 
Let $M=\Gamma\backslash\H^n$ be a hyperbolic manifold, where $\Gamma$ is a discrete group 
without torsion. 

The {\em limit set} $\Lambda_\Gamma:=\overline{\Gamma.x}\setminus\Gamma.x$ is the set of accumulation
points of any orbit of $\Gamma$ in the boundary $\partial \H^n$. 
We assume $\Gamma$ to be non elementary, i.e. it is not virtually abelian, or equivalently, 
the set $\Lambda_\Gamma$ of its limit points in the boundary is infinite. 

Let $K=SO(n)$ be the stabilizer in $G$ of the point $o=(0,\dots, 0,1)\in \H^n$, 
and $L=SO(n-1)$ the stabilizer of the unit vector $(0,\dots, 0, 1)$ based at $o$. 
Then the unit tangent bundle $T^1M$ identifies with $\Gamma\backslash G/L$ whereas the homogeneous space
$\Gamma\backslash G$ identifies with the frame bundle over $T^1M$, whose fiber at any point is isomorphic to $L$. 
Denote by $\pi:T^1M\to M$ the natural projection.

The Busemann function is defined on $\partial\H^n\times\H^n\times\H^n$ by
$$
\beta_\xi(x,y)=\lim_{z\to\xi}d(x,z)-d(y,z)\,.
$$
The following map is an homeomorphism from $T^1\H^n$ to $\partial \H^n\times\partial\H^n\setminus\{diagonal\}\times \R$\,:
$$v\mapsto (v^-,v^+,\beta_{v^-}(\pi(v),o))\,.$$
In these coordinates, the geodesic flow acts by translation on the $\R$-coordinate, and an isometry $\gamma$ of $\H^n$ acts
as follows: $\gamma.(v^-,v^+,s)=(v^-,v^+,s\pm\beta_{v^-}(o,\gamma^\pm o))$. 
This homeomorphism induces on the quotient a homeomorphism from $T^1M$ to $\Gamma\backslash\left(\partial \H^n\times\partial\H^n\setminus\{diagonal\}\times \R\right)$.

The strong unstable manifold $W^{su}(v)=\{w\in T^1M, d(g^{-t}v, g^{-t}w)\to 0 \mbox{ when }t\to +\infty\}$ 
of a vector $v=(v^-,v^+,s)\in T^1\H^n$ under the geodesic flow is exactly the set of vectors $w=(v^-,w^+,s)$. 
The strong unstable manifold of a frame $F\in G$ under the action of $A$ is exactly its $N$-orbit.  \\

The nonwandering set of the geodesic flow $\Omega\in T^1M$ is exactly the set of vectors $v\in T^1M$ such that
$v^\pm \in\Lambda_\Gamma$. The nonwandering set $\Omega_\mathcal{F}$ of the action of $A$ in $\Gamma\backslash G$ is simply the set of frames 
whose first vector is in $\Omega$. 
The point is that $\Omega_\mathcal{F}$ is {\em not} $N$-invariant. 
Let $\mathcal{E}_\mathcal{F}=N.\Omega_\mathcal{F}$ and $\mathcal{E}$ its projection, that is the set of vectors $v\in T^1M$ such that $v^-\in\Lambda_\Gamma$.  

The difficulty in general is to deal with $\Omega_\mathcal{F}$ and $\mathcal{E}_\mathcal{F}$, to get informations on the dynamics of 
$N$ on $\mathcal{E}_\mathcal{F}$ thanks to the knowledge of the $A$ action on $\Omega_\mathcal{F}$ 'and vice versa).


\section{Ergodic theory}\label{ergodic}

The Patterson-Sullivan $\delta_\Gamma$-conformal  density is a family $(\nu_x)$ of equivalent measures 
on the boundary giving full measure to $\Lambda_\Gamma$, and satisfying the two crucial properties, 
for all $\gamma\in\Gamma$ and $\nu_x$-almost all $\xi\in\Lambda_\Gamma$ : 
$$
\gamma_*\nu_x=\nu_{\gamma x}\quad\mbox{and}\quad \frac{d\nu_y}{d\nu_x}(\xi)
=\exp(\delta_\Gamma\beta_\xi(x,y))
$$

The family $(\lambda_x)$ of Lebesgue measures on the unit spheres $T^1_x\H^n$, seen as measures on the boundary,
 satisfy a similar property:  for all $g\in G$ and $\lambda_x$-almost all $\xi\in\Lambda_\Gamma$ : 
$$
g_*\lambda_x=\lambda_{g x}\quad\mbox{and}\quad \frac{d\lambda_y}{d\lambda_x}(\xi)=\exp((n-1)\beta_\xi(x,y))
$$

We define a $\Gamma$-invariant measure $\tilde{m}_{BM}$
on $T^1\H^n=G/L$, in terms of the Hopf coordinates,  by 
$$
d\tilde{m}_{BM}(v)=e^{\pm \left(\delta_\Gamma\beta_{v^+}(o,\pi(v))+\delta_\Gamma\beta_{v^-}(o,\pi(v))\right)} d\nu_o(v^-)\,d\nu_o(v^+)dt
$$ 
This measure is also invariant under the geodesic flow. 
We denote $m_{BM}$ the induced measure on $T^1M$. 
This measure lifts in a natural way to the frame bundle, by taking locally its product with the Haar measure on $L$. 
We  denote it by $m^\mathcal{F}_{BM}$. 
 
The measure $m^\mathcal{F}_{BM}$ on $\Gamma\backslash G$ is not invariant under the $N$-action. 
However, its product structure allows to build such a measure. 
The Burger-Roblin measure is defined on $T^1\H^n$ by 

$$
d\tilde{m}_{BR}(v)=e^{\pm \left((n-1)\beta_{v^+}(o,\pi(v))+\delta_\Gamma\beta_{v^-}(o,\pi(v))\right)}d\nu_o(v^-)\,d\lambda_o(v^+)dt
$$
It is $\Gamma$-invariant, quasi-invariant under the geodesic flow, and we denote by $m_{BR}$ the 
induced measure on the quotient. 
Its lift $m^\mathcal{F}_{BR}$ to the frame bundle (by doing the local product with the Haar measure of $L$, as above) is 
$N$-invariant.

In any local chart $B$ of the strong unstable foliation of $A$, whose leaves are here the $N$-orbits, 
these measures have a very similar form, the product of the same transverse measure $\nu_T$, by a measure on the leaves. 
The transverse measure $(\nu_T)$ is a collection of measures on all transversals to the foliation by $N$-orbits, which is invariant by holonomy.  
In the chart $B$ of the foliation, denote by $T$ a transversal, and for $t\in T$, let $N(t)$ be the local leaf of the foliation intersected with $B$. 
For all continuous functions $\varphi:\Gamma\backslash G\to \R$ supported in $B$, we have
$$
\int \varphi\,dm_{BR}=\int_T \int_{N(t)} \varphi(F) d\lambda_{N.t}(F)d\nu_T(t)\,,$$
whereas
$$
\int \varphi\,dm_{BM}=\int_T \int_{N(t)} \varphi(F) d\mu^{BM}_{N.t}(F)d\nu_T(t)\,,
$$
where $d\lambda_{N.t}=dn$ and $d\mu_{N.t}^{BM}$ are respectively the conditional measures
of $m_{BR}$ and $m_{BM}$ on the $N$-orbits.

When it is finite, the measure $m_{BM}$ on $T^1M$ is mixing. 
Filling a gap in \cite{FlaminioSpatzier} Flaminio-Spatzier, D. Winter \cite{Winter} proved that its lift to $\Gamma\backslash G$ is also mixing, as soon as
the group $\Gamma$ is Zariski dense. 

As a corollary, he gets the following equidistribution result of averages pushed by the flow. 
Let $F\in\Omega_\mathcal{F}$ be a frame, and $\varphi:\Gamma\backslash G$ be a continuous map with compact support. 
Define  the following averages 
$$
M_1^t(\varphi)(R)=\frac{1}{\mu_{N.F}^{BM}(B_N(F,1))}\int_{B_N(F,1)}\varphi(a_t.X)\,d\mu_{N.F}^{BM}(X)\,.
$$
\begin{thm}[Winter \cite{Winter}]\label{mixing} Let $\Gamma<SO^o(n,1)$ be a Zariski dense discrete subgroup, such that 
the Bowen-Margulis measure $m_{BM}$ is finite. Then it is mixing.
As a consequence, for all $F\in\Omega_\mathcal{F}$ and $\varphi$ continuous with compact support, 
the averages $M_1^t(\varphi)(F)$ converge towards $\int_{\Gamma\backslash G}\varphi\,dm_{BM}$ when $t\to +\infty$. 
\end{thm}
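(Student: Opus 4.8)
The statement has two layers: the mixing of the Bowen--Margulis measure on the frame bundle, and the equidistribution of the horospherical averages $M_1^t(\varphi)(F)$ deduced from it. For the first layer I would take as essential input the mixing of $m_{BM}$ on $T^1M$, which is already recorded above; the whole difficulty is to lift it from $T^1M=\Gamma\backslash G/L$ to $\Gamma\backslash G$. Since $A$ commutes with $L=SO(n-1)$, the frame flow is an isometric extension of the geodesic flow by the compact fibre $L$, and the only obstruction to mixing of the lift $m^{\mathcal F}_{BM}$ is a nonconstant $L$-equivariant measurable function invariant under $A$, i.e. a nontrivial rotational factor. The plan here is to rule this out using that $\Gamma$ is Zariski dense: the closure of the group generated by the holonomies of the frame flow must then be all of $L$, so no such factor survives. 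This is exactly Winter's theorem (with the input of Hochman that makes the argument work in every dimension), and I would simply invoke it. From now on I assume $m^{\mathcal F}_{BM}$ is mixing.

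The core of the argument is the derivation of the equidistribution statement by a thickening argument \`a la Margulis. Fix $F\in\Omega_{\mathcal F}$ and $\varphi$ continuous with compact support. The strong unstable leaves of $A$ are the $N$-orbits, and $\Gamma\backslash G$ carries locally the product structure in which $m_{BM}$ disintegrates as the transverse measure $\nu_T$ against the conditionals $\mu^{BM}_{N.\tau}$ on the leaves. I would thicken the plaque $B_N(F,1)$ in all transverse directions --- the stable horospherical subgroup $N^-$, the flow direction $A$, and the compact fibre $L$ --- by an amount $\epsilon$, obtaining an open set $U$ of positive finite measure. Applying mixing (with $m_{BM}$ normalised to mass one) to the indicator of $U$ gives
\[
\frac{1}{m_{BM}(U)}\int_U \varphi(a_tX)\,dm_{BM}(X)\xrightarrow[t\to+\infty]{}\int_{\Gamma\backslash G}\varphi\,dm_{BM}.
\]
The whole point is then to show that this box average is, up to a small error, the horospherical average $M_1^t(\varphi)(F)$.

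To see this I would disintegrate the box integral along the $N$-foliation, writing it as $\int_T\big(\int_{B_N(\tau)}\varphi(a_tX)\,d\mu^{BM}_{N.\tau}(X)\big)\,d\nu_T(\tau)$ over the transversal $T$ parametrising $U$, and compare each inner integral with the base one over $B_N(F,1)$. Using the stable/neutral holonomy $\Phi_\tau\colon B_N(F,1)\to B_N(\tau)$, two facts make the comparison work. First, the explicit Patterson--Sullivan form of the conditionals together with the holonomy-invariance of $\nu_T$ shows that $\Phi_\tau$ transports $\mu^{BM}_{N.F}$ to $\mu^{BM}_{N.\tau}$ up to a Radon--Nikodym factor $1+O(\epsilon)$, uniformly in $\tau$. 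Second, writing $X=\Phi_\tau(X_0)$ with $X_0\in B_N(F,1)$, the point $a_tX$ differs from $a_tX_0$ by an element whose $N^-$-part contracts as $t\to+\infty$ and whose $A$- and $L$-parts stay $\epsilon$-bounded (this is exactly where $A$ commuting with $L$ is used: the fibre direction is neutral, not contracted); uniform continuity of $\varphi$, with modulus $\omega_\varphi$, then yields $|\varphi(a_tX)-\varphi(a_tX_0)|\le\omega_\varphi(\epsilon)$ for $t$ large. Combining, each inner integral equals $\mu^{BM}_{N.F}(B_N(F,1))\,M_1^t(\varphi)(F)$ up to multiplicative error $1+O(\epsilon)$ and additive error $O(\omega_\varphi(\epsilon))$, and the same estimate for $m_{BM}(U)=\int_T\mu^{BM}_{N.\tau}(B_N(\tau))\,d\nu_T(\tau)$ makes the normalisations cancel. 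Passing to the limit in $t$ and then letting $\epsilon\to0$ gives $M_1^t(\varphi)(F)\to\int\varphi\,dm_{BM}$.

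The main obstacle is really the first layer: the passage from mixing on $T^1M$ to mixing on the frame bundle, where the compact group $L=SO(n-1)$ can a priori produce an eigenfunction and destroy mixing; controlling this is precisely what Zariski density and Hochman's input secure, and it is the reason the hypothesis appears. Within the second layer the only delicate points are that the non-contracting neutral directions $A$ and $L$ must be absorbed by the $\epsilon\to0$ limit rather than by the dynamics, and that the whole estimate be uniform in $\tau$, which forces $F\in\Omega_{\mathcal F}$ so that the conditional $\mu^{BM}_{N.F}$ charges $B_N(F,1)$ and the thickened box $U$ has positive measure.
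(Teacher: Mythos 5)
Your proposal is essentially correct, but note that the paper itself contains no proof of this statement: it is quoted wholesale from Winter \cite{Winter}, mixing and corollary alike, so there is no internal argument to compare against. What you have done is reconstruct the standard route: invoke Winter for the lift of mixing from $T^1M$ to $\Gamma\backslash G$ (where, as you say, Zariski density is exactly what kills a potential rotational $L$-factor of the frame flow), and then derive the equidistribution of $M_1^t(\varphi)(F)$ by the Margulis thickening argument --- transverse $\epsilon$-box in the $N^-AL$ directions, mixing applied to the indicator of the box, disintegration of $m_{BM}$ along the $N$-foliation, stable/neutral holonomy with Radon--Nikodym factor $1+O(\epsilon)$, and uniform continuity of $\varphi$ absorbing the non-contracted $A$- and $L$-displacements in the $\epsilon\to 0$ limit. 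This is faithful to how the corollary is actually obtained in the literature, and your observation that $F\in\Omega_\mathcal{F}$ is needed precisely so that the thickened box has positive $m_{BM}$-measure is the right one; a cleaner variant you implicitly use, worth making explicit, is to define the box as the union of holonomy images of the plaque $B_N(F,1)$ itself, which avoids any boundary-measure-zero hypothesis on $\partial B_N(F,1)$ at this stage (the paper only needs such null-boundary conditions later, in the equicontinuity proposition and the choice of radii in Section 4).

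One genuine misattribution should be corrected: Hochman's theorem \cite{Hochman} plays no role in Winter's mixing result. In this paper it is the ratio ergodic theorem for measure-preserving $\R^d$-actions (Theorem \ref{Hopf}), applied to the $N\cong\R^{n-1}$-action in the unique ergodicity argument of Section 4; that is what the author's acknowledgment about ``a proof valid in any dimension'' refers to. Winter's mixing on the frame bundle instead rests on Zariski density via the structure of the transitivity group (in the spirit of Brin--Pesin, using Benoist's work), independently of Hochman. This does not damage your proof, since you invoke Winter for the mixing layer anyway, but the parenthetical claim as written is wrong.
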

 
We will need an ergodic theorem for the $N$-action on $\Gamma\backslash G$. As the natural $N$-invariant measure $m_{BR}^\mathcal{F}$ is infinite, 
we need a version of Hopf ratio ergodic theorem for actions of $\R^d$. The desired result is the following. 

\begin{thm}[Hochman  \cite{Hochman}]\label{Hopf} Consider a free, ergodic measure preserving action $(\phi_t)_{t\in\R^d}$ of $\R^d$ on a standard $\sigma$-finite measure space $(X,\mathcal{B},\nu)$. 
Let $\|.\|$ be any norm on $\R^d$ and $B_r=\{t\in\R^d, \|t\|\le r\}$. Then for every $f, g\in L^1(X,\nu)$, with $\int g\,d\nu\neq 0$ we have
$$
\frac{\int_{B_r} f(\phi^tx) dt}{\int_{B_r}g(\phi^tx) dt}\to \frac{\int_X f d\nu}{\int_X g d\nu }
$$
almost surely. 
\end{thm}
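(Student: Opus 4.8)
The plan is to follow the classical route of Hopf and Chacon--Ornstein, the only genuinely new feature being the passage from one to $d$ parameters. Splitting into positive and negative parts and replacing $g$ by $|g|$, it suffices to treat $f,g\ge 0$ with $\int g\,d\nu>0$; write $S_rh(x)=\int_{B_r}h(\phi^t x)\,dt$, so that the object to control is the ratio $R_rf=S_rf/S_rg$. The argument has three ingredients: a maximal inequality for $\sup_r R_rf$, almost sure convergence of $R_rf$ on a dense subclass of $L^1$, and identification of the limit. The maximal inequality makes the set of $f$ for which $R_rf$ converges a.e.\ closed in $L^1$, so it is enough to verify convergence on a dense subclass; this subclass can be taken to consist of $g$ itself, for which $R_rg\equiv 1$, together with additive coboundaries $h-h\circ\phi^s$, whose contribution to $R_rf$ is supported on $(B_r+s)\triangle B_r$ and therefore negligible by the F\o lner property of the Euclidean balls.

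The heart of the matter is the maximal inequality, and this is exactly where the multiparameter case departs from the classical theorem. In dimension one the Hopf maximal ergodic theorem, proved by the rising-sun (sub-additivity) argument, suffices; for $d\ge 2$ the balls $B_r$ are genuinely $d$-dimensional, no ordering of the partial integrals is available, and the naive transference through a Hardy--Littlewood maximal function produces an estimate that is not of ratio type. The approach I would take is to run a covering argument directly on each orbit: any finite family of concentric Euclidean balls in $\mathbb{R}^d$ admits, by a Vitali or Besicovitch covering lemma, a subfamily of bounded overlap (with a bound $C_d$ depending only on $d$) covering a definite proportion of its union, and this purely geometric fact transports to $X$ through the free $\mathbb{R}^d$-action. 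Comparing $S_rf$ and $S_rg$ over the \emph{same} balls then yields a Hopf-type maximal inequality $\lambda\int_{E_\lambda}g\,d\nu\le C_d\int f\,d\nu$, where $E_\lambda=\{x:\sup_r R_rf(x)>\lambda\}$, which is the control required for the ratio on an infinite measure space. This is the step I expect to be the real obstacle: reconciling the geometric covering of the Euclidean balls with the demand that the resulting estimate be genuinely of ratio type and valid on an infinite measure space, with no recourse to the one-dimensional rising-sun trick, is precisely the technical core, and extracting the dimensional constant $C_d$ in a form that survives the transference to $X$ is where the care is needed.

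With the maximal inequality in hand, the limit is identified as follows. Since $|(B_r+s)\triangle B_r|/|B_r|\to 0$ for each fixed $s$, the difference $S_r(f\circ\phi^s)-S_rf$ is an integral over $(B_r+s)\triangle B_r$, so after division by $S_rg$ the maximal inequality shows that the almost sure limit $L=\lim_r R_rf$ is invariant under every $\phi^s$; by ergodicity $L$ equals a constant $c$ almost everywhere. Finally, measure preservation gives $\int S_rf\,d\nu=|B_r|\int f\,d\nu$ and likewise for $g$, so the probability measures $dP_r=S_rg\,d\nu/\int S_rg\,d\nu$ satisfy $\int R_rf\,dP_r=\int f\,d\nu/\int g\,d\nu$ for every $r$; the uniform integrability furnished by the maximal inequality lets one pass to the limit and conclude $c=\int f\,d\nu/\int g\,d\nu$, which is the asserted equality.
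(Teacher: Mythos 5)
The paper does not prove Theorem~\ref{Hopf} at all: it is imported verbatim from Hochman's work, so your proposal can only be measured against the cited proof. Your first ingredient is sound in outline, and you correctly locate a covering-based ratio maximal inequality at the core --- but note that Vitali is not an option here: along an orbit you must compare the measure $f(\phi^t x)\,dt$ against $g(\phi^t x)\,dt$, which is a general Radon measure with no doubling property, so only the Besicovitch covering lemma (which does hold for balls of an arbitrary norm on $\R^d$, with a constant $C_d$) yields a weak-type bound of ratio form; freeness of the action is what allows the transference to $X$. This much is indeed how Hochman's argument begins.

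The genuine gap is the dense subclass step, and it is fatal as stated. For a coboundary $f=h-h\circ\phi^s$ with $h$ bounded, $S_r f(x)=\int_{B_r}h(\phi^t x)\,dt-\int_{B_r+s}h(\phi^t x)\,dt$, so $|S_r f(x)|\le \|h\|_\infty\,|B_r\triangle(B_r+s)|=O(r^{d-1})$ for fixed $s$. The F\o lner property makes this negligible against $|B_r|\asymp r^d$, which is why this scheme proves the \emph{finite}-measure Wiener theorem; but your denominator is $S_r g(x)$, and on an infinite measure space conservativity gives only $S_r g(x)\to\infty$ with no rate whatsoever --- in genuinely infinite-measure examples $S_r g$ grows far slower than $r^{d-1}$ --- so the claim $R_r f\to 0$ on coboundaries is unjustified. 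In $d=1$ Hopf's proof survives precisely because the sum telescopes, $S_n f=h-h\circ T^n$ is \emph{bounded}; for $d\ge 2$ no telescoping occurs and an honest boundary term of size $r^{d-1}$ remains. This mismatch between the boundary of the ball and the (unknown) growth of $S_r g$ is exactly the obstruction that kept the multiparameter ratio ergodic theorem open, and Hochman's proof does not run through coboundaries: he couples the Besicovitch maximal inequality with a different and substantially more delicate mechanism for establishing convergence and identifying the limit (in the nonsingular category, via the Hurewicz theorem and an analysis by induction on the number of parameters). Your final identification step is also shaky --- the probability measures $P_r=S_r g\,d\nu/\int S_r g\,d\nu$ move with $r$, and the maximal inequality does not deliver uniform integrability along this moving family, since $\{R_r f>\lambda\}$ is not invariant and $\int_{\{R_r f>\lambda\}}S_r f\,d\nu$ does not reduce to $|B_r|\int_{E_\lambda}f\,d\nu$ --- but that step is repairable by standard Fatou-type arguments once a.e.\ convergence is known; the coboundary step is not, because it silently assumes finite-measure growth $S_r g\asymp r^d$, which is unavailable here.
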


\section{Proof of the unique ergodicity} 

Assume that $\nu$ is a $N$ invariant ergodic and conservative measure on 
$\mathcal{E}_{rad}=\{F\in \mathcal{E}_\mathcal{F}\subset \Gamma\backslash G, a_{-t}F \mbox{ comes back i.o. in a compact set}\}$. 

Choose a generic frame $F\in \mathcal{E}_{rad}$ w.r.t $\nu$, i.e. a frame whose $N$ orbit becomes equidistributed towards $\nu$. 
Without loss of generality, translating $F$ along its $N$ orbit, 
we can assume that $F\in \Omega_{rad}=\{F\in \mathcal{E}_\mathcal{F}\subset \Gamma\backslash G, a_{-t}F \mbox{ comes back i.o. in a compact set}\}$. 

Therefore, we know that there exists a sequence $t_k\to +\infty$, such that $a_{-t_k}F$ converges to some frame $F_\infty\in\Omega_\mathcal{F}$. 

Without loss of generality, we can assume that
 $$
\mu_{N.F_\infty}(\partial B_N(F_\infty,1))=0\,,\quad\mbox{and for all } k\in\N \quad \mu_{N.a_{-t_k}F }(\partial B_N(a_{-t_k}F ,1))=0
$$ 
Indeed, if it were not the case, 
as $\mu_{N.F_\infty}$ and $\mu_{N.a_{-t_k}F}$ for all $k$ are Radon measure, there are at most countably many radii $r$ such that
$\mu_{NF_\infty}(\partial B_N(F_\infty,r))>0$ or $\mu_{N.a_{-t_k}F}(\partial B_N(a_{-t_k}F,r))>0$.  Choose $\rho$ close to $1$ such that 
all these measures of boundaries of balls of radius $\rho$ are zero,
 and change $t_k$ into $t_k+\log \rho$, $F_\infty$ into $g^{-\log \rho}F_\infty$.
 As the measures $\mu_{N.a_{-\log \rho} F_\infty}$ and $(a_{-\log\rho})_*\mu_{N.F_\infty}$ are proportional, 
we get $\mu_{N.a_{-\log\rho}.F_\infty}(\partial B_N(a_{-\log\rho}.F_\infty,1))=0$, and similarly $\mu_{N.a_{-\log\rho}.a_{-t_k}F }(\partial B_N(a_{-\log\rho}.a_{-t_k}F,1))=0$.

Choose any (nonnegative) continuous maps with compact support $\varphi$ and $\psi$, 
s.t. $\int\psi\,d\nu>0$. 
We will prove that 
$$
\frac{\int\varphi\,d\nu}{\int\psi\,d\nu}=
\frac{\int\varphi\,dm_{BR}}{\int\psi\,dm_{BR}}
$$
It will imply the theorem. 

The first important ingredient is an equicontinuity argument. 

Let $B_N(F,1)$ be the $N$-ball around $F$ inside $N.F$. 
For any continuous map $\varphi$, $F\in \Gamma\backslash G$ and $t\ge 0$, define
$$
M_1^t(\varphi)(F)=\frac{1}{\mu_{N.F}^{BM}(B_N(F,1))}\int_{B_N(F,1)}\varphi(a_t.X)d\mu_{N.F}^{BM}(X)\,.
$$
\begin{prop}[Equicontinuity] Let $\varphi$ be any uniformly continuous function. 
For all $F\in\Gamma\backslash G$ such that $\mu_{N.F}(\partial B_{NF}(F,1))=0$,  the maps $F\mapsto M_1^t(\varphi)(F)$ are equicontinuous in $t\ge 0$. 
\end{prop}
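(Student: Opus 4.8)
The plan is to fix a frame $F_0$ with $\mu_{N.F_0}(\partial B_N(F_0,1))=0$ and to prove that $\sup_{t\ge 0}\bigl|M_1^t(\varphi)(F)-M_1^t(\varphi)(F_0)\bigr|\to 0$ as $F\to F_0$, which is exactly equicontinuity of the family $\{M_1^t(\varphi)\}_{t\ge 0}$ at $F_0$. Write $F=\Gamma g$ and $F_0=\Gamma g_0$ with $g$ close to $g_0$, and parametrize both unit balls through the exponential chart of $N$: the maps $n\mapsto \Gamma g n$ and $n\mapsto \Gamma g_0 n$ identify a neighbourhood of $F$ in the leaf $N.F$ and of $F_0$ in $N.F_0$ with regions of $N$, so that points sharing the same $N$-coordinate are matched, $X=Fn\leftrightarrow X_0=F_0n$. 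Denoting by $\bar\mu_F$ the push-forward to this common chart of $\mu_{N.F}^{BM}$ restricted to $B_N(F,1)$ and normalized to mass one, and likewise $\bar\mu_{F_0}$, I add and subtract $\int \varphi(a_tF_0n)\,d\bar\mu_F(n)$ to split
$$M_1^t(\varphi)(F)-M_1^t(\varphi)(F_0)=\int\bigl[\varphi(a_tFn)-\varphi(a_tF_0n)\bigr]\,d\bar\mu_F(n)+\int\varphi(a_tF_0n)\,d(\bar\mu_F-\bar\mu_{F_0})(n),$$
into a term (a) transporting the integrand from $N.F$ to $N.F_0$, and a term (b) comparing the two normalized conditional measures.

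Term (a) rests on an exact, $t$-independent geometric identity. Equip $G$ with a right-invariant metric; it descends to a metric $d$ on $\Gamma\backslash G$ for which every right translation, in particular each $a_t$, is an isometry. Since $a_t$ acts on the right, the single right translation by $na_t$ cancels simultaneously, giving
$$d\bigl(a_t(Fn),a_t(F_0n)\bigr)=d\bigl(\Gamma g\,n a_t,\Gamma g_0\,n a_t\bigr)=d(\Gamma g,\Gamma g_0)=d(F,F_0)$$
for every $t\ge 0$ and every $n$. Thus, with $\omega_\varphi$ the modulus of uniform continuity of $\varphi$, one has $|\varphi(a_tFn)-\varphi(a_tF_0n)|\le \omega_\varphi(d(F,F_0))$ uniformly in $t$ and $n$, so term (a) is at most $\omega_\varphi(d(F,F_0))\to 0$. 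This is the point where the $NA$-structure and right-invariance conspire: although the $N$-direction is expanded by the flow, keeping the same $N$-coordinate on the two leaves makes $a_t$ move the matched pair by exactly the same amount, for all $t$ at once.

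The main obstacle is term (b). It is bounded by $\|\varphi\|_\infty\,\|\bar\mu_F-\bar\mu_{F_0}\|_{TV}$, a quantity in which $t$ no longer appears, so any smallness here is automatically uniform in $t$; everything thus reduces to showing $\|\bar\mu_F-\bar\mu_{F_0}\|_{TV}\to 0$. Here I will use that the leafwise conditionals $\mu^{BM}_{N.\,\cdot}$ are built from the Patterson–Sullivan density $\nu_o$ and the factor $e^{\delta_\Gamma\beta}$ entering $m_{BM}$, which depend continuously, through the Busemann cocycle and the conformal density, on the point of $\Gamma\backslash G$; transversal continuity then yields uniform convergence of the unnormalized leaf densities on a fixed compact piece of the chart as $F\to F_0$. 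The delicate part is the boundary: the regions of $N$ parametrizing $B_N(F,1)$ and $B_N(F_0,1)$ do not coincide but have symmetric difference concentrating near the sphere $\partial B_N(F_0,1)$, and one must also show the normalizing masses $\mu^{BM}_{N.F}(B_N(F,1))$ converge to $\mu^{BM}_{N.F_0}(B_N(F_0,1))$. This is precisely where the hypothesis $\mu_{N.F_0}(\partial B_N(F_0,1))=0$ is used: by a Portmanteau-type argument it forces the mass in a shrinking neighbourhood of the boundary sphere to vanish, so no mass is created or lost through $\partial B_N(F_0,1)$, the normalizations converge, and $\|\bar\mu_F-\bar\mu_{F_0}\|_{TV}\to 0$. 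Combining (a) and (b) gives $\sup_{t\ge 0}|M_1^t(\varphi)(F)-M_1^t(\varphi)(F_0)|\to 0$, which is the asserted equicontinuity.
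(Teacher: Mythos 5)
Your term (b) is in the right spirit (it parallels the paper's use of the hypothesis $\mu_{N.F}(\partial B_N(F,1))=0$ to control the discrepancy between averages over nearby balls), but term (a) contains a fatal error, and it sits exactly where the real difficulty of the proposition lives. A right-invariant metric on $G$ does \emph{not} descend to $\Gamma\backslash G$: the quotient is by the \emph{left} action of $\Gamma$, so descent requires left-$\Gamma$-invariance, and $SO^o(n,1)$, being noncompact semisimple, admits no bi-invariant Riemannian metric. The metric actually used on $\Gamma\backslash G$ comes from a left-$G$-invariant metric, and for it right translation by $a_t$ is emphatically not an isometry --- if it were, the frame flow would be an isometric flow, hence of zero entropy and never mixing, and the $N$-orbits could not be strong unstable manifolds, since the defining property $d(a_{-t}F,a_{-t}F')\to 0$ would force $F=F'$. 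So the ``exact, $t$-independent identity'' $d(a_t(Fn),a_t(F_0n))=d(F,F_0)$ is false; indeed, had it been true, equicontinuity would follow with no hypothesis on boundaries of balls at all, which should have been a warning sign.

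Concretely, your matching fails already when $F$ and $F_0$ lie on the same $N$-orbit: write $F_0=Fn_0$ with $n_0$ small. Since $N$ is abelian, $F_0n=Fnn_0$, so $a_t(F_0n)$ differs from $a_t(Fn)$ by right multiplication by $a_{-t}n_0a_t$, which is $n_0$ dilated by the factor $e^t$ ($N$ is the expanding horospherical subgroup for the forward flow; this is the same fact behind the paper's identity $M^0_{e^t}(\varphi)(F)=M_1^t(\varphi)(a^{-t}F)$, i.e.\ $a_t$ maps $N$-balls of radius $1$ to $N$-balls of radius $e^t$). The matched pair thus separates at rate $e^t\,d(F,F_0)$, and $|\varphi(a_tFn)-\varphi(a_tF_0n)|$ is not uniformly small in $t$; a generic displacement suffers the same fate, because conjugating it by $n$ creates an $N$-component. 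This is precisely why the paper's proof never matches points by equal $N$-coordinate across the expanding direction: for $F,F'$ in the same $N$-orbit it compares averages over the two \emph{overlapping} balls $B_N(F,1)$ and $B_N(F',1)$, where the integrands coincide on the overlap and the symmetric difference is killed, uniformly in $t$, by the hypothesis $\mu_{N.F}(\partial B_N(F,1))=0$; pointwise comparison is reserved for displacements along \emph{weak stable} leaves (using stable holonomy, not equal $N$-coordinates), where forward flow does not expand; and a general nearby pair is treated by composing the two cases through an intermediate frame $F''\in N.F$ on the stable leaf of $F'$. To repair your proof you would have to restructure term (a) along these lines rather than appeal to an isometry that cannot exist.
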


\begin{proof} The result is relatively classical for surfaces at least. It is written in details on $T^1M$ 
  here \cite{schapira2004}. The assumptions in this reference are  slightly stronger, but the compactness assumption of \cite{schapira2004}
was used only to ensure uniform continuity of $\varphi$.

The extension to $\Gamma\backslash G$ does not change anything. Indeed, the fibers of the fiber bundle 
$\Gamma\backslash G\to T^1M$ are included in the (weak) stable leaves of the $A$ action. Therefore, the
argument still applies. 
We refer to \cite{schapira2004}, but the idea is as follows. If $F$ and $F'$ are very close along a weak stable leaf, the sets 
$a_t(B_N(F,1))$ and $a_t(B_N(F',1))$ remain at distance roughly $d(F,F')$ when $t\ge 0$. 

If they are close and belong to the same $N$-orbit, then the assumption on the boundary of the balls allows to ensure that their averages stay close for all $t\ge 0$.

If $F$ and $F'$ are close, in general, there exists a frame $F'' \in N.F$ on the stable leaf of $F'$, 
so that combining both arguments above allows to conclude to equicontinuity. 
\end{proof}

The next ingredient is mixing. As said in section \ref{ergodic} above, when $t\to +\infty$, for all $X\in \Omega_{rad}$, we have 
$M_1^t(\varphi)(X)\to \int_{T^1M}\varphi\,dm_{BM}$ and $M_1^t(\psi)(X)\to\int_{T^1M}\psi\,dm_{BM}$, 
uniformly on compact sets. \\

From the above equicontinuity argument, we deduce relative compactness.   
In particular, consider the compact
$K=\{F_\infty\}\cup\{a_{-t_k}F,k\in\N\}$. Each frame of $K$ satisfies the assumption on the measure of the boundary of the $N$-ball of radius $1$. 
Therefore, there exists a subsequence of $t_k$, still denoted by $t_k$, such that 
$
M_1^{t_k}(\varphi)(F')$ converges uniformly to $\int \varphi dm_{BM}$ on $K$. \\

Let us now do the observation that 
$$
M_{e^{t}}^0(\phi)(F)=M_{1}^t(\phi)(a^{-t}F)\quad\mbox{and} \quad M_{e^t}^0(\psi)(F)=M_1^t(\phi)(a^{-t}F)\,.$$
Therefore, as $a^{-t_k}F$ converges to $F_\infty$, and by the above uniform convergence on  the compact
$K=\{F_\infty\}\cup\{a_{-t_k}F,k\in\N\}$, we deduce that 
\begin{eqnarray}\label{convergence-BM}
\frac{\int_{B_{N}(F,e^{t_k})}\varphi d\mu_{N.F}^{BM}}{\int_{B_{N}(F,e^{t_k})} \psi\,d\mu_{N.F}^{BM}}
\to \frac{\int\varphi\,dm_{BM}}{\int\psi\,dm_{BM}}\,.
\end{eqnarray}

Now, consider a small chart of the foliation $B$, with boundary of measure zero, 
and $\varphi$, $\psi$ continuous maps supported in $B$. 
Then, the integral 
$M_1^{t_k}(\varphi)(a_{-t_k}F)$ can be rewritten as 
$$
\int_T\int_{N(t)}\varphi\,d\mu_{N.t}\,d\nu_{T,t_k}+R(t_k,\varphi)\,,
$$
where
$$
\nu_{T,t_k}=\frac{1}{\mu_{N.F}(B_{N}(F,e^{t_k})}\sum_{t\in T\cap B_N(F,e^{t_k})}\delta_t
$$
As we chose $F_\infty$ such that $\mu_{N.F_\infty}(\partial B(F_\infty, 1))=0$, we deduce, as in \cite{MaucourantSchapira}, that the error term $R(t_k,\varphi)$, which is bounded from above by 
$\|\varphi\|_\infty.\frac{\mu_{N.F}(B_N(F,e^{t_k}+r_0)\setminus B_N(F,e^{t_k}-r_0))}{\mu_{N.F}(B_N(F,e^{t_k}))}$, goes to $0$ when $t_k\to +\infty$. 

As $M_1^{t_k}$ converges to $m_{BR}$ it implies that for all transversals $T$ to the foliation  of $\Gamma\backslash G$ in $N$ orbits, 
 $\nu_{T,t_k}$ converges weakly to $\nu_T$. \\

Coming back to our assumptions, thanks to theorem \ref{Hopf} we know that for all continuous maps $\varphi, \psi\in C_c(\Gamma\backslash G)$ with $\int\psi\,d\nu>0$, 
we have 
$$
\frac{\int_{B_{N.F}(F,r)}\varphi(nF)dn}{\int_{B_{N.F}(F,r)}\psi(nF)dn}\to \frac{\int_{\Gamma\backslash G} \varphi d\nu}{\int_{\Gamma\backslash G} \psi\,d\nu}
$$
By a standard approximation argument, this convergence also holds for $\psi={\bf 1}_\mathcal{B}$ where $\mathcal{B}$ 
is a relatively compact chart of the foliation with $\nu(\partial \mathcal{B})=0$. \\

Consider now such a box $\mathcal{B}$, a transversal $T$ to the foliation into $N$-orbits inside $\mathcal{B}$, and maps $\varphi$ with support in $\mathcal{B}$. 
Decompose the above averages as 
$$
\frac{\int_{B_{N.F}(F,e^{t_k})}\varphi(nF)dn}{\int_{B_{N.F}(F,e^{t_k})}{\bf 1}_\mathcal{B}(nF)dn}=  
\int_T\int_{N(t)}\varphi(n.t)dn \,d\nu_{T,t_k,B}+R(t_k,\varphi,B)\,,
$$
where 
$\nu_{T,t_k,B}=\frac{\mu_{N.F}(B_{N}(F,e^{t_k})}{\int_{B_{N.F}(F,e^{t_k})}{\bf 1}_\mathcal{B}(nF)dn} \nu_{T,t_k}$ and $R(t_k,\varphi,B)$ is bounded from above by 
$$\|\varphi\|_\infty.\frac{\int_{B_{N.F}(F,e^{t_k}+r_0)\setminus B_{NF}(F,e^{t_k}-r_0)}{\bf 1}_\mathcal{B}(nF)dn}{\int_{B_{N.F}(F,e^{t_k})}{\bf 1}_\mathcal{B}(nF)dn}$$

As $\int_{B_{N.F}(F,e^{t_k})}{\bf 1}_\mathcal{B}(nF)dn$ is bounded by the Lebesgue measure of $B_{NF}(F,e^{t_k})$ which grows polynomially in $r=e^{t_k}$, 
we know that for some subsequence of $t_k$, still denoted by $t_k$, $R(t_k, \varphi, B)$ converges to $0$. 
For such a subsequence, we deduce the convergence of the measure $\nu_{T,t_k,B}$ towards some positive finite measure on $T$. 
By definition of $\nu_{T,t_k}$ and $\nu_{T,t_k,B}$ this transverse measure has to be proportional to the transverse measure $\nu_T$ induced by the Bowen-Margulis measure $m_{BM}$, so that $\nu$ is necessarily proportional to $m_{BR}$. 
 

\addcontentsline{toc}{section}{References}
\bibliography{biblio}
\bibliographystyle{amsalpha}


\end{document}